\newtheorem{theorem}{Theorem}[section]
\newtheorem{corollary}[theorem]{Corollary}
\newtheorem{lemma}[theorem]{Lemma}
\newtheorem{definition}{Definition}[section]
\newtheorem{proposition}[theorem]{Proposition}
\newtheorem{remark}{Remark}[section]
\newcommand{\CC}{ \ensuremath{\mathbb {C}} }
\newcommand{\V}{\mathcal V}
\newcommand{\XX}{ \ensuremath{\mathbb {X}} }
\author{Marino Gran}
\address{Institut de Recherche en Math\'ematique et Physique, Universit\'e Catholique de Louvain, Chemin du Cyclotron 2,
1348 Louvain-la-Neuve, Belgium} 
\email{marino.gran@uclouvain.be}
\author{Diana Rodelo}
\address{CMUC, Universidade de Coimbra,
3001--501 Coimbra, Portugal\newline Departamento de Matem\'atica,
Faculdade de Ci\^{e}ncias e Tecnologia, Universidade do Algarve,
Campus de Gambelas, 8005--139 Faro, Portugal}
\email{drodelo@ualg.pt}
\title{Some remarks on pullbacks in Gumm categories}
\dedicatory{Dedicated to Manuela Sobral on the occasion of her seventieth birthday}
\subjclass[2000]{
18C05, 08B10,
08C05, 
18B10, 
18E10} 
\keywords{regular category, Mal'tsev category, Goursat category, Gumm category, congruence modularity, pullback properties.}
\thanks{The second author was supported by the Centro de Matem\'{a}tica da
Universidade de Coimbra (CMUC), funded by the European Regional
Development Fund through the program COMPETE and by the Portuguese
Government through the FCT - Funda\c{c}\~{a}o para a Ci\^{e}ncia e
a Tecnologia under the projects PEst-C/MAT/UI0324/2013 and
PTDC/MAT/120222/2010 and grant number SFRH/BPD/69661/2010.}
\begin{document}

\begin{abstract}
We extend some properties of pullbacks which are known to hold in a Mal'tsev context to the more general context of Gumm categories. The varieties of universal algebras which are Gumm categories are precisely the congruence modular ones. These properties lead to a simple alternative proof of the known property that central extensions and normal extensions coincide for any Galois structure associated with a Birkhoff subcategory of an exact Goursat category.
\end{abstract}

\maketitle


\section{Introduction}\label{Introduction}
\setcounter{equation}{0}
A categorical approach to the property of congruence modularity, well known in universal algebra, was proposed in \cite{BG0,BG1} via a categorical formulation of the so-called \emph{Shifting Lemma} (recalled in Section \ref{Congruence modularity}).
The categories satisfying this categorical property are called \emph{Gumm categories}, since it was the mathematician H.P. Gumm who proved that, for a variety of universal algebras, the validity of the Shifting Lemma is equivalent to congruence modularity \cite{Gumm}. As examples of Gumm categories, we also have regular Mal'tsev categories~\cite{CLP,CKP} and regular Goursat categories~\cite{CKP}, which are defined by the property that any pair of equivalence relations $R$ and $S$ in $\CC$ (on a same object) $3$-permute: $RSR=SRS$.

In this context \cite{BournFibrations} D. Bourn established an interesting permutability result (see Theorem~\ref{Bourn}), that we use in the present paper to prove the following property of regular Gumm categories (Proposition \ref{regular Gumm}). Given a commutative diagram
$$
\label{split |1|2|}
  \vcenter{\xymatrix@!0@R=45pt@C=60pt{
    Z\times_V U \ar@<-2pt>@{>>}[d]_-{} \ar@{>>}[r]  \ar@{}[dr]|-{1} &
    X \ar@<-2pt>@{>>}[d] \ar[r] \ar@{}[dr]|-{2} & U \ar[d] \\
    Z \ar@{>>}[r] \ar@<-2pt>[u] & Y \ar[r] \ar@<-2pt>[u] & V }}
$$
in a regular Gumm category such that the whole rectangle is a pullback and the left square {\footnotesize\fbox{1}} is composed by vertical split epimorphisms and horizontal regular epimorphisms, then both squares {\footnotesize\fbox{1}} and  {\footnotesize\fbox{2}} are pullbacks. This property is known to hold in any regular Mal'tsev category, and has been used, for example, in the categorical theory of central extensions \cite{Gran, BR}.

In the present article we also show that this property can be used to give a new proof of a remarkable property of exact Goursat categories, namely the fact that central extensions and normal extensions relative to any (admissible) Birkhoff subcategory  $\XX$ of $\CC$ coincide \cite{JK}.
Let us recall that a full reflective subcategory  $\XX$ of an exact category $\CC$ is called a Birkhoff subcategory when $\XX$ is closed in $\CC$ under subobjects and regular quotients. In particular, a Birkhoff subcategory of a variety of universal algebras is just a subvariety. A Birkhoff subcategory $\XX$ is admissible, from the point of view of Categorical Galois Theory, when the reflector $I\colon \XX \rightarrow \CC$ preserves pullbacks of regular epimorphisms in $\XX$ along any morphism in $\CC$.
The notions of central extension and of normal extension are defined relatively to the choice of the admissible Birkhoff subcategory $\XX$ of $\CC$, as recalled in Section \ref{Galois}. It is precisely the useful property of pullbacks in regular Gumm categories stated above which allows one to find a simple proof of the coincidence of these two notions in the exact Goursat context (Theorem \ref{central split} and Corollary \ref{Coincidence}).

In~\cite{JK} G. Janelidze and G.M. Kelly proved that every Birkhoff subcategory $\XX$ of an exact category $\CC$ with modular lattice of equivalence relations (on any object in $\CC$) is always admissible. It was later shown by V. Rossi in ~\cite{Rossi} that the same admissibility property still holds in the more general context of Gumm categories which are \emph{almost exact}, a notion introduced by G. Janelidze and M. Sobral in \cite{JS}. We conclude the article by relating our observations on Gumm categories with these results concerning the admissibility of Galois structures.

\section{Preliminaries}\label{Calculus of relations}
In the present paper the term \emph{regular category}~\cite{EC} will be used for a finitely complete category such that any kernel pair has a coequaliser and, moreover, regular epimorphisms are stable under pullbacks. Any morphism $f \colon A \rightarrow B$ in a regular category $\CC$ has a factorisation $f=m\cdot p$, with $p$ a regular epimorphism and $m$ a monomorphism. It is well known that such factorisations are necessarily stable under pullbacks in a regular category. The subobject determined by the monomorphism $m$ in the factorisation of $f=m\cdot p$ is usually called the \emph{image} of $f$.

A relation $R$ from $A$ to $B$ is a subobject $\langle r_1,r_2 \rangle\colon R\rightarrowtail A\times B$. The opposite relation, denoted by $R^{\circ}$, is the relation from $B$ to $A$ given by the subobject $\langle r_2,r_1 \rangle\colon R\rightarrowtail B\times A$. A morphism $f:A\rightarrow B$ can be identified with the relation $\langle 1_A,f \rangle\colon A\rightarrowtail A\times B$; its opposite relation $\langle f, 1_A \rangle\colon A\rightarrowtail B\times A$ is usually referred to as $f^{\circ}$. Given another relation $\langle s_1,s_2 \rangle\colon S\rightarrowtail B\times C$ from $B$ to $C$, the composite relation of $R$ and $S$ is a relation $SR$ from $A$ to $C$, obtained as the image of the induced arrow  $\langle  r_1 \pi_1, s_2  \pi_2  \rangle\colon R \times_B S \rightarrowtail A \times C$, where $(R\times_B S, \pi_1, \pi_2)$ is the pullback of $r_2$ and $s_1$. With the above notation, we can write any relation $\langle r_1,r_2 \rangle\colon R\rightarrowtail A\times B$ as $R=r_2 r_1^{\circ}$. The following properties are well known, and also easy to check (see \cite{CKP}, for instance):

\begin{lemma}
\label{pps of ms as relations}
Let $f: A\rightarrow B$ be any morphism in a regular category $\CC$. Then:
\begin{enumerate}
 \item {$f f^{\circ} f=f$ and $f^{\circ} f f^{\circ}=f^{\circ}$;}
  \item{
  $f f^{\circ}=1_B$ {if and only if} $f$
  is a regular {epimorphism}.}
\end{enumerate}
\end{lemma}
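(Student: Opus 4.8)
The plan is to argue entirely within the calculus of relations recalled above, using only the identification of a morphism $f\colon A\rightarrow B$ with the relation $\langle 1_A,f\rangle\colon A\rightarrowtail A\times B$, the description of relational composition (pullback followed by image factorisation), and the fact that in a regular category (regular epi)--mono factorisations are stable under the relevant manipulations.

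For part (1), I would first unwind the two composites I need. Computing $f^{\circ}f$ means pulling back $f$ along $f$, so $f^{\circ}f$ is exactly the kernel pair $\mathrm{Eq}(f)$ of $f$ (the induced arrow into $A\times A$ is the kernel pair inclusion, already a monomorphism). From this two inequalities are immediate: $1_A\leq f^{\circ}f$, because the diagonal $\langle 1_A,1_A\rangle$ factors through $\mathrm{Eq}(f)$ since trivially $f\cdot 1_A=f\cdot 1_A$; and $ff^{\circ}\leq 1_B$, because $ff^{\circ}$ is the image of $\langle f,f\rangle\colon A\rightarrow B\times B$, which factors through the diagonal $1_B=\langle 1_B,1_B\rangle$. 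Using that relational composition is monotone and that $1_A,1_B$ act as identities, I then get
$$ f=f\cdot 1_A\ \leq\ ff^{\circ}f\ \leq\ 1_B\cdot f=f, $$
so $ff^{\circ}f=f$. (Alternatively this can be checked by direct computation: $ff^{\circ}f$ is the image of $\langle 1_A,f\rangle\cdot p_1$, a monomorphism composed with a split epimorphism, hence equal to $\langle 1_A,f\rangle=f$.) The second identity $f^{\circ}ff^{\circ}=f^{\circ}$ then follows by applying $(-)^{\circ}$ to the first, using $(RS)^{\circ}=S^{\circ}R^{\circ}$ and $(R^{\circ})^{\circ}=R$.

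For part (2), suppose first that $f$ is a regular epimorphism. Then $ff^{\circ}$ is the image of $\langle f,f\rangle=\Delta_B\cdot f$, where $\Delta_B=\langle 1_B,1_B\rangle$ is a monomorphism; since $f$ is a regular epimorphism this is already a (regular epi)--mono factorisation, so the image is $\Delta_B$, i.e. $ff^{\circ}=1_B$. Conversely, assume $ff^{\circ}=1_B$ and factor $f=m\cdot p$ with $p$ a regular epimorphism and $m$ a monomorphism. Then $\langle f,f\rangle=\Delta_B\cdot m\cdot p$ with $\Delta_B\cdot m$ a monomorphism and $p$ a regular epimorphism, so $ff^{\circ}$, the image of $\langle f,f\rangle$, is the subobject represented by $\Delta_B\cdot m$. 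Postcomposing both representing monomorphisms with the first product projection $B\times B\rightarrow B$, the equality of subobjects $\Delta_B\cdot m=\Delta_B=1_B$ forces $m$ to be an isomorphism, whence $f$ is a regular epimorphism.

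The only mildly delicate point — and the step I would be most careful about — is the bookkeeping identification of $f^{\circ}f$ with the kernel pair of $f$ and of $ff^{\circ}$, $ff^{\circ}f$ with the images of the explicit arrows $\langle f,f\rangle$ and $\langle 1_A,f\rangle\cdot p_1$; once these reductions are carried out, the rest is just the stability of (regular epi)--mono factorisations in a regular category, together with the elementary monotonicity and unit laws for relational composition, all of which are standard (cf. \cite{CKP}).
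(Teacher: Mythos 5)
Your proof is correct. The paper itself gives no argument for this lemma --- it simply records the statement as ``well known and easy to check'' with a pointer to the Carboni--Kelly--Pedicchio paper --- and what you write is precisely the standard verification that reference has in mind: identifying $f^{\circ}f$ with the kernel pair $R_f$ and $ff^{\circ}$ with the image of $\langle f,f\rangle=\Delta_B\cdot f$, then using monotonicity and the unit laws for part (1) and stability of (regular epi, mono) factorisations for part (2). The only phrasing to tighten is in the converse of (2): equality of subobjects means the representing monos $\Delta_B\cdot m$ and $\Delta_B$ differ by an isomorphism $\theta$ with $\Delta_B\cdot m=\Delta_B\cdot\theta$, and postcomposing with the first projection gives $m=\theta$, hence $m$ is an isomorphism --- which is what you intend, so the argument stands as written.
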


A relation $R$ from an object $A$ to $A$ is called a \emph{relation on $A$}. Such a relation is \emph{reflexive} if $1_A\leqslant R$, \emph{symmetric} if $R^{\circ} \leqslant R$, and \emph{transitive} when
$RR \leqslant R$. A relation $R$ on $A$ is called an \emph{equivalence relation}  when it is reflexive, symmetric and transitive. Any kernel pair $\langle f_1,f_2 \rangle\colon R_f \rightarrowtail A\times A$ of a morphism $f: A\rightarrow B$ is an equivalence relation, called an \emph{effective} equivalence relation. By using the composition of relations, it can be written either as $R_f=f^{\circ}f$, or as $R_f=f_2f_1^{\circ}$. Of course, if $f=m\cdot p$ is the (regular epimorphism, monomorphism) factorisation of an arbitrary morphism $f$, then $R_f$=$R_p$, so that an effective equivalence relation is always the kernel pair of a regular epimorphism.

\section{Gumm categories}\label{Congruence modularity}
A lattice $(L, \vee, \wedge)$ is called \emph{modular} when, for $x,y,z\in L$, one has
$$
    x\leqslant z \Rightarrow x\vee (y\wedge z)=(x\vee y)\wedge z.
$$
A variety $\V$ of universal algebras is called \emph{congruence modular} when every lattice of congruences (= effective equivalence relations) on any algebra in $\V$ is modular. It is well known from ~\cite{Gumm} that a variety $\V$ is congruence modular if and only if the following property, called the Shifting Lemma, holds in $\V$: \\

\noindent \textbf{Shifting Lemma} \\
Given congruences $R,S$ and $T$ on the same algebra $X$ such that $R\wedge S\leqslant T$, whenever $x,y,t,z$ are elements in $X$ with $(x,y)\in R \wedge T$, $(x,t) \in S$, $(y,z)\in S$ and $(t,z)\in R$, it then follows that $(t,z) \in T$:
$$
  \xymatrix@C=30pt{
    x \ar@{-}[r]^-S \ar@{-}[d]^-R \ar@(l,l)@{-}[d]_-T & t \ar@{-}[d]_-R \ar@(r,r)@{--}[d]^-T \\
    y \ar@{-}[r]_-S  & z }
$$

This notion has been extended to a categorical context in \cite{BG1}. Indeed, the property expressed by the Shifting Lemma can be equivalently reformulated in any finitely complete category $\mathbb C$ by asking that a specific class of internal functors are discrete fibrations, as we are now going to recall. For any object $X$ in $\mathbb C$ and any equivalence relations $R$, $S$, $T$ on $X$ with $$R \wedge S \le T \le R$$ there is a canonical
inclusion $(i,j) \colon T \square S \rightarrow R \square S$ of equivalence relations, depicted as
\begin{equation}\label{discrete}
 \vcenter{\xymatrix@=50pt{ {T \square S\, } \ar@{>->}[r]^{j}
\ar@<1ex>[d]^{\pi_{2}} \ar@<-1ex>[d]_{\pi_{1}}
 & {R \square S}  \ar@<1ex>[d]^{\pi_{2}} \ar@<-1ex>[d]_{\pi_{1}}
 & \\ {T \,} \ar@{>->}[r]_{i}  & R, &
 }}
\end{equation}
where $T \square S$ (respectively, $R \square S$) is the largest double equivalence relation on $T$ and $S$ (respectively, on $R$ and $S$) and $\pi_1$ and $\pi_2$ are the projections on $T$ (respectively, on $R$).
\begin{definition}\cite{BG1}
A finitely complete category $\CC$ is called a \emph{Gumm category} when any inclusion $(i,j) \colon T \square S \rightarrow R \square S$ as in (\ref{discrete}) is a discrete fibration. This means that any of the commutative squares in (\ref{discrete}) is a pullback.
\end{definition}

Let us recall that a \emph{Mal'tsev} category $\CC$ is a finitely complete category such that every reflexive relation in $\CC$ is an equivalence relation. A regular category $\CC$ is a Mal'tsev category when the composition of (effective) equivalence relations on any object in $\CC$ is $2$-permutable: $R S=S R$, where $R$ and $S$ are (effective) equivalence relations on a same object (see \cite{CLP, CKP}). The strictly weaker $3$-permutability property for (effective) equivalence relations, $RSR=SRS$, defines the notion of regular \emph{Goursat categories}~\cite{CKP}. Goursat categories, thus in particular Mal'tsev categories, have the property that every lattice of equivalence relations (on the same object) is modular (Proposition 3.2 in~\cite{CKP}). This fact implies that any regular Mal'tsev category and, more generally, any regular Goursat category is a Gumm category. Thanks to the characterization theorem in \cite{Gumm} one knows that a variety of universal algebras is congruence modular if and only if it is a Gumm category.

The following property of regular Gumm categories, due to D. Bourn, will play a fundamental role in the next section.

\begin{theorem}\emph{(Theorem 7.12 of~\cite{BournFibrations})}\label{Bourn} Let $\CC$ be a regular Gumm category. Consider equivalence relations $R,S$ and $T$ on a same object such that $RS=SR$ and $R\wedge S\leqslant T \leqslant R$. Then $TS=ST$.
\end{theorem}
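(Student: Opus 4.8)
The plan is to exploit the Gumm property applied to the pair $(S,R)$, transported along the inclusion $T\le R$. Since $RS=SR$ and we want $TS=ST$, the natural candidate for $TS$ is the relation one obtains by restricting the double equivalence relation $R\square S$ to $T$ on one side. Concretely, I would first recall that in a regular category the condition $TS=ST$ is equivalent to $TS$ being an equivalence relation (indeed $3$-permutability of $T$ and $S$ already follows from $2$-permutability of $R,S$ together with $T\le R$, by a standard Goursat-type argument: $TST\le RSR = SR \le \ldots$, so the real content is upgrading $3$-permutability of $(T,S)$ to $2$-permutability). So the goal reduces to showing that the reflexive relation $TS$ is transitive, or equivalently symmetric.

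The key step is to read off the commutative square in \eqref{discrete},
\[
\xymatrix@=30pt{ T\square S \ar@{>->}[r]^{j} \ar[d]_{\pi_1} & R\square S \ar[d]^{\pi_1} \\ T \ar@{>->}[r]_{i} & R, }
\]
which by the Gumm hypothesis is a pullback. An element of $R\square S$ is a square $(a,b,c,d)$ with $a\,R\,b$, $c\,R\,d$, $a\,S\,c$, $b\,S\,d$; the condition that its $\pi_1$-image $(a,b)$ lies in $T$ forces, by the pullback property, the whole square to lie in $T\square S$, hence in particular $c\,T\,d$. This is exactly the categorical Shifting-Lemma content, and it is the mechanism by which a $T$-relation on one "side" of an $S$-commuting square propagates to the opposite side. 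I would then set up a generalised-element (or, to be fully categorical, a relational-calculus) computation: take a generalised element witnessing a pair in $ST$, i.e. $x\,S\,u$ and $u\,T\,v$; I want to produce $w$ with $x\,T\,w$ and $w\,S\,v$. Using $RS=SR$ and $T\le R$, one first finds $w$ with $x\,R\,w$ and $w\,S\,v$ (this is where $2$-permutability of $R,S$ enters, since $u\,T\,v$ gives $u\,R\,v$, so $x\,S\,u\,R\,v$, whence $x\,(RS)v=(SR)v$... one must be slightly careful about the order, but permutability lets one rearrange). Then I assemble the $S$-commutative square with corners $x,w$ (top) and $u,v$ (bottom): the vertical $S$-edges are $x\,S\,u$ and $w\,S\,v$, the bottom edge is $u\,T\,v$, and the top edge is $x\,R\,w$. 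Since $R\wedge S\le T\le R$, this square lies in $R\square S$, and its bottom edge lies in $T$; applying the pullback/discrete-fibration property (reading the square via $\pi_2$ rather than $\pi_1$, or equivalently after a symmetry) yields that the top edge $x\,R\,w$ actually lies in $T$. Hence $x\,T\,w$ and $w\,S\,v$, i.e. the pair $(x,v)\in TS$, proving $ST\le TS$; the reverse inclusion follows by taking opposite relations, using that $T$ and $S$ are symmetric.

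The main obstacle I anticipate is purely bookkeeping: making the permutability rearrangements and the choice of the intermediate element $w$ precise enough that the resulting square genuinely lands in $R\square S$ (one needs all four edges in the right relations simultaneously), and making sure one applies the discrete-fibration property on the correct projection — the square \eqref{discrete} is stated with $\pi_1$, so to conclude "bottom in $T$ $\Rightarrow$ top in $T$" one uses that $R\square S$ (and its restriction) is itself an equivalence relation, hence symmetric, allowing the roles of the two horizontal edges to be exchanged. A cleaner route avoiding explicit elements is to phrase everything through Lemma \ref{pps of ms as relations} and the relational identities for $R\square S$, translating "the square \eqref{discrete} is a pullback" into an inclusion of composite relations and then computing $TS = T\cdot S = $ (image of the appropriate leg of $T\square S$) $= S\cdot T = ST$; I would likely present the element-based argument for readability but keep the relational computation in mind as the rigorous backbone.
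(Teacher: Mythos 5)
This theorem is not proved in the paper at all: it is imported verbatim as Theorem 7.12 of~\cite{BournFibrations}, so there is no internal proof to compare yours against; I can only assess your argument on its own terms, and it is essentially correct. The core mechanism is the right one: from (a generalized element witnessing) $x\,S\,u$ and $u\,T\,v$, the hypotheses $T\leqslant R$ and $RS=SR$ produce a witness $w$ with $x\,R\,w$ and $w\,S\,v$; the quadruple $(x,w,u,v)$ is then an element of $R\square S$ (a finite limit, so no covers are needed at this step) one of whose $R$-edges, namely $(u,v)$, lies in $T$, and the pullback property in (\ref{discrete}) — which by the definition holds for either projection, so your worry about $\pi_1$ versus $\pi_2$ is immaterial — forces the other $R$-edge $(x,w)$ into $T$, yielding one inclusion between $ST$ and $TS$; the reverse inclusion follows formally by applying $(-)^{\circ}$ and the symmetry of $S$ and $T$. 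Two caveats, neither fatal: first, the element-style steps that invoke $RS=SR$ and the final factorisation through $TS\rightarrowtail X\times X$ are only valid after pulling back along a cover and then descending along that regular epimorphism, a standard point in regular categories which you explicitly acknowledge and which the relational calculus of Lemma \ref{pps of ms as relations} makes rigorous; second, your parenthetical claim that $3$-permutability of $(T,S)$ already follows from $TST\leqslant RSR=SR$ is not substantiated (that inclusion alone does not give $TST=STS$), but it plays no role in the actual argument, so it should simply be dropped. Note also that your composition convention is the reverse of the paper's (there $SR$ means first $R$, then $S$), which is harmless here since all the relations involved are symmetric, but should be fixed in a final write-up. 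Whatever route Bourn's original proof takes, your direct verification via the categorical Shifting Lemma is self-contained and elementary, and it is consistent with how the present paper uses the Gumm condition elsewhere.
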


\section{Pullback properties in regular Gumm categories}
\label{Pullback properties}
In this section we extend some useful properties of pullbacks from the context of Mal'tsev categories to that of Gumm categories.

The first observation concerns a generalisation of Proposition 3.4 in \cite{BR}:
\begin{proposition}\label{regular Gumm}
Let $\CC$ be a regular Gumm category. Consider a commutative diagram
\begin{equation}
\label{split |1|2|}
  \vcenter{\xymatrix@!0@R=45pt@C=60pt{
    Z\times_V U \ar@<-2pt>@{>>}[d]_-{\varphi} \ar@{>>}[r]^-x  \ar@{}[dr]|-{1} &
    X \ar@<-2pt>@{>>}[d]_-f \ar[r]^-u \ar@{}[dr]|-{2} & U \ar[d]^-{w}  \\
    Z \ar@{>>}[r]_-y \ar@<-2pt>[u]_-{\sigma} & Y \ar[r]_-v \ar@<-2pt>[u]_-s & V, }}
\end{equation}
such that the whole rectangle is a pullback and the left square \emph{\footnotesize\fbox{1}} is composed by vertical split epimorphisms and horizontal regular epimorphisms. Then both squares \emph{\footnotesize\fbox{1}} and \emph{\footnotesize\fbox{2}} are pullbacks.
\end{proposition}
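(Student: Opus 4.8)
The strategy is to reduce the claim to a single identity between equivalence relations on $W:=Z\times_V U$ and then to establish it by means of Bourn's permutability result, Theorem~\ref{Bourn}. Observe first that the hypothesis that the outer rectangle is a pullback says exactly that $(W,\varphi,ux)$ is the pullback of $w$ along $vy$. By the pasting lemma for pullbacks, once we know that the right-hand square {\footnotesize\fbox{2}} is a pullback, the left-hand square {\footnotesize\fbox{1}} is one as well; so it suffices to prove that {\footnotesize\fbox{2}} is a pullback. Let $p=\langle f,u\rangle\colon X\to Y\times_V U$ be the comparison morphism. Using the identification $Z\times_V U\cong Z\times_Y(Y\times_V U)$, the morphism $W\to Y\times_V U$ induced by $y$ is a pullback of the regular epimorphism $y$, hence a regular epimorphism; since this morphism equals $p\cdot x$ and $x$ is a regular epimorphism, $p$ is itself a regular epimorphism. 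Consequently {\footnotesize\fbox{2}} is a pullback if and only if $p$ is a monomorphism, i.e.\ if and only if $R_f\wedge R_u=\Delta_X$. Pulling these equivalence relations back along the regular epimorphism $x$ (and using $fx=y\varphi$) transforms this into the assertion
\begin{equation*}
R_{y\varphi}\wedge R_{ux}=R_x ,
\end{equation*}
everything now living on $W$; here only the inequality $R_x\leqslant R_{y\varphi}\wedge R_{ux}$ is obvious.

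The key observation is that the kernel pairs $R_\varphi$ and $R_{ux}$ of the two projections of the pullback $W=Z\times_V U$ always $2$-permute, with no Mal'tsev- or Goursat-type hypothesis: setting $g=vy\varphi=w\cdot ux\colon W\to V$, the relational equalities $(ux)\varphi^{\circ}=w^{\circ}(vy)$ and $\varphi(ux)^{\circ}=(vy)^{\circ}w$ --- which hold precisely because $(W,\varphi,ux)$ is a pullback --- give $R_\varphi R_{ux}=R_{ux}R_\varphi=R_g$, while $\langle\varphi,ux\rangle$ being a monomorphism gives $R_\varphi\wedge R_{ux}=\Delta_W$. I would then apply Theorem~\ref{Bourn} with $R:=R_{ux}$, $S:=R_\varphi$ and $T:=R_x$: indeed $RS=SR$, and $R\wedge S=\Delta_W\leqslant R_x\leqslant R_{ux}=R$ (the last inequality because $ux$ factors through $x$), so the theorem yields $R_xR_\varphi=R_\varphi R_x$. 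Hence $R_x$ and $R_\varphi$ $2$-permute, so that $R_x\vee R_\varphi=R_xR_\varphi$.

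It then remains to recognise $R_{y\varphi}$ as this join and to conclude. Since $y\varphi=fx$ factors through $x$ and $y\varphi$ factors through $\varphi$, one has $R_x\leqslant R_{y\varphi}$ and $R_\varphi\leqslant R_{y\varphi}$, whence $R_xR_\varphi=R_x\vee R_\varphi\leqslant R_{y\varphi}$. For the reverse inclusion I would use the sections of the left square: the idempotent $\sigma\varphi\colon W\to W$ satisfies $\varphi\cdot\sigma\varphi=\varphi$ and $x\cdot\sigma\varphi=sy\varphi$ (using the compatibility $x\sigma=sy$ of the sections), so any two elements related by $R_{y\varphi}$ are linked, through $\sigma\varphi$, by a composite $R_\varphi R_xR_\varphi$; thus $R_{y\varphi}\leqslant R_\varphi R_xR_\varphi=R_xR_\varphi$. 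Finally, as $R_x\leqslant R_{ux}$ one has $R_xR_{ux}=R_{ux}$, and a short diagram chase --- which the calculus of relations in a regular category (equivalently, the modular law of that calculus) makes rigorous --- gives $(R_xR_\varphi)\wedge R_{ux}\leqslant R_x$: if $a$ and $b$ are related by both, a witness $c$ with $\varphi(a)=\varphi(c)$ and $x(c)=x(b)$ satisfies $ux(c)=ux(b)=ux(a)$, so $c=a$ since $\langle\varphi,ux\rangle$ is a monomorphism, and therefore $x(a)=x(c)=x(b)$. Hence $R_{y\varphi}\wedge R_{ux}=R_x$; pulling this back along $x$ gives $R_f\wedge R_u=\Delta_X$, so $p$ is a monomorphism, hence an isomorphism, {\footnotesize\fbox{2}} is a pullback, and {\footnotesize\fbox{1}} follows by the pasting lemma.

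I expect the genuine difficulty to lie in the correct use of Theorem~\ref{Bourn}: the permutability one actually needs --- that of $R_x$ with $R_\varphi$ --- is not available a priori, whereas the permutability of the two pullback projections $R_\varphi$ and $R_{ux}$ comes for free, and the trick is to run Bourn's theorem "downwards" with the large permuting relation $R_{ux}$ in the role of $R$ and the sought relation $R_x$ in the role of $T$. Everything else is bookkeeping with pullbacks and with the calculus of relations valid in any regular category; the only point where the Gumm hypothesis on $\CC$ is used is this single invocation of Theorem~\ref{Bourn}.
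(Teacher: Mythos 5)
Your proof is correct, and its decisive step is exactly the one in the paper: you apply Theorem~\ref{Bourn} with $R=R_{ux}$, $S=R_{\varphi}$, $T=R_x$, after extracting $R_{ux}R_{\varphi}=R_{\varphi}R_{ux}$ and $R_{ux}\wedge R_{\varphi}=\Delta$ from the outer pullback, and you use the compatible sections through $\sigma\varphi$ precisely where the paper uses them to obtain $\varphi(R_x)=R_y$ --- your inclusion $R_{y\varphi}\leqslant R_{\varphi}R_{x}R_{\varphi}$ is just the conjugate-by-$\varphi$ form of that identity. The difference is only in the surrounding bookkeeping. The paper proves the left square first, by showing $\varphi x^{\circ}=y^{\circ}f$ so that $\langle\varphi,x\rangle$ is an isomorphism, and then obtains the right square from the nontrivial fact that change of base along a regular epimorphism reflects isomorphisms (Proposition 2.7 of~\cite{JK}); you prove the right square first, showing that $\langle f,u\rangle$ is a regular epimorphism and reducing its monicity along the regular epimorphism $x$ to the identity $R_{y\varphi}\wedge R_{ux}=R_x$ on $Z\times_V U$, which you settle using the permutability $R_xR_\varphi=R_\varphi R_x$ given by Theorem~\ref{Bourn}; your element chase for $(R_xR_{\varphi})\wedge R_{ux}\leqslant R_x$ is indeed made rigorous by the modular law of the relation calculus, e.g.\ $(R_xR_{\varphi})\wedge R_{ux}\leqslant R_x(R_{\varphi}\wedge R_xR_{ux})\leqslant R_x(R_{\varphi}\wedge R_{ux})=R_x$, and the left square then follows by the easy direction of the pasting lemma. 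So your route trades the citation of the descent-type reflection result for a slightly longer relational computation, but uses the hypotheses in the same way: regularity throughout, the Gumm condition only through Theorem~\ref{Bourn}, and the split epimorphisms with compatible sections --- note that your equation $x\sigma\varphi=sy\varphi$, like the paper's claim that the comparison $R_x\to R_y$ is a split epimorphism, presupposes $x\sigma=sy$, i.e.\ that square {\footnotesize\fbox{1}} is a morphism of split epimorphisms, which is the intended reading of the hypothesis.
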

\begin{proof}
The comparison morphism $\langle \varphi, x \rangle\colon Z\times_V U\to Z\times_Y X$ is clearly a monomorphism. We are now going to show that it is also a regular epimorphism, i.e. that $\varphi x^{\circ}=y^{\circ}f$. This will imply that the square  {\footnotesize\fbox{1}} is a pullback.

Consider the effective equivalence relations $R=R_{ux}, S=R_{\varphi}$ and $T=R_x$ (to use the same notations as in Theorem~\ref{Bourn}). The fact that the whole rectangle {\footnotesize\fbox{1}\fbox{2}} is a pullback, implies that $R_{ux} \wedge R_{\varphi}=1$ and $R_{ux}R_{\varphi}=R_{\varphi}R_{ux}$. Since $R_{ux}\wedge R_{\varphi}=1\leqslant R_x\leqslant R_{ux}$, we conclude that $R_xR_{\varphi}=R_{\varphi}R_x$ by Theorem~\ref{Bourn}. Equivalently, one has the equality $x^{\circ}x \varphi^{\circ}\varphi = \varphi^{\circ}\varphi x^{\circ}x$.

Since the vertical morphisms in {\footnotesize\fbox{1}} are split epimorphisms, then the comparison morphism $R_x\to R_y$ is also a split epimorphism, thus the image of $R_x$ along $\varphi$ is $R_y$: $\varphi(R_x)=R_y$ or, equivalently, $\varphi x^{\circ}x \varphi^{\circ} = y^{\circ} y$. One then has the following equalities:
$$\begin{array}{rcl}
  \varphi x^{\circ} & = & \varphi \varphi^{\circ}\varphi x^{\circ}x x^{\circ} \\
  & = & \varphi x^{\circ}x \varphi^{\circ}\varphi x^{\circ} \\
  & = &  y^{\circ} y \varphi x^{\circ} \\
  & = & y^{\circ} f x x^{\circ} \\
  & = & y^{\circ}f.
\end{array}
$$

Consequently, the arrow $\langle \varphi, x \rangle\colon Z\times_V U\to Z\times_Y X$ is an isomorphism, and the square {\footnotesize\fbox{1}} is a pullback. The square {\footnotesize\fbox{2}} is then also a pullback, since the change-of-base functor along a regular epimorphism in a regular category reflects isomorphisms (see Proposition 2.7 of~\cite{JK}, for instance).
\end{proof}

From the proposition above we shall deduce a general result in the context of almost exact Gumm categories, which extends Lemma 1.1 in ~\cite{Gran}. Recall that a regular category $\CC$ is called an \emph{almost exact category} \cite{JS} when any regular epimorphism in $\mathbb C$ is an effective descent morphism.
This property can be expressed as follows \cite{JST}: for any regular epimorphism $f\colon \xymatrix@=15pt{ X \ar@{>>}[r] & Y}$ and any vertical discrete fibration $(g,h) \colon R \rightarrow R_f$ in $\CC$
$$
\xymatrix@R=25pt@C=30pt{
  R \ar@<4pt>[r]^-{r_1} \ar@<-4pt>[r]_-{r_2} \ar[d]_-{h} \ar@{}[dr]|(.35){\lrcorner} &  X' \ar[l] \ar[d]^-{g} \\
  R_f \ar@<4pt>[r]^-{f_1} \ar@<-4pt>[r]_-{f_2} & X \ar[l] \ar@{>>}[r]^-{f} & Y}
$$
with $R$ an equivalence relation on $X'$, then $R$ is an effective equivalence relation.
\begin{proposition}
\label{efficiently regular Gumm}
Let $\CC$ be an almost exact Gumm category. Consider a commutative diagram
\begin{equation}
\label{|1|2|}
  \vcenter{\xymatrix@!0@R=45pt@C=60pt{
    Z\times_V U \ar@{>>}[d]_-{\varphi} \ar@{>>}[r]^-x \ar@{}[dr]|(.7){\ulcorner} \ar@{}[dr]|-{1} &
    X \ar@{>>}[d]_-f \ar[r]^-u \ar@{}[dr]|-{2} & U \ar[d]^-{w}  \\
    Z \ar@{>>}[r]_-y & Y \ar[r]_-v & V, }}
\end{equation}
such that the whole rectangle is a pullback and the left square \emph{\footnotesize\fbox{1}} is a pushout of regular epimorphisms.
Then both squares \emph{\footnotesize\fbox{1}} and \emph{\footnotesize\fbox{2}} are pullbacks.
\end{proposition}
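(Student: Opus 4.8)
The plan is to follow the proof of Proposition~\ref{regular Gumm}, the only new ingredient being the use of almost exactness to compensate for the absence of the splitting hypothesis. Write $A=Z\times_V U$ and consider the comparison morphism $c=\langle\varphi,x\rangle\colon A\to Z\times_Y X$ attached to the square {\footnotesize\fbox{1}}. As in Proposition~\ref{regular Gumm} it is a monomorphism, since the outer rectangle being a pullback gives $R_x\wedge R_\varphi\leqslant R_{ux}\wedge R_\varphi=1$; so it suffices to prove that $c$ is a regular epimorphism, equivalently that $\varphi x^\circ=y^\circ f$. Once this is done, {\footnotesize\fbox{1}} is a pullback, and {\footnotesize\fbox{2}} is a pullback as well because the change-of-base functor along the regular epimorphism $y$ reflects isomorphisms (Proposition~2.7 of~\cite{JK}), exactly as before.

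Set $R=R_{ux}$, $S=R_\varphi$, $T=R_x$. The outer rectangle being a pullback yields $R\wedge S=1$ and $RS=SR$, and since $1\leqslant R_x\leqslant R_{ux}$, Theorem~\ref{Bourn} gives $R_xR_\varphi=R_\varphi R_x$. In particular $R_x$ and $R_\varphi$ are permuting equivalence relations, so their join is $R_x\vee R_\varphi=R_xR_\varphi$; and, just as in the computation of Proposition~\ref{regular Gumm}, this permutability already forces the direct image $\varphi(R_x)=\varphi x^\circ x\varphi^\circ$ to be an equivalence relation on $Z$, with $\varphi^{-1}(\varphi(R_x))=R_\varphi R_xR_\varphi=R_xR_\varphi$.

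The crux --- and the only place where almost exactness is needed --- is the identification $R_xR_\varphi=R_{y\varphi}$. Indeed, suppose for a moment that $R_xR_\varphi$ is an \emph{effective} equivalence relation. Then the pushout of the pair $(x,\varphi)$ is computed as the quotient $A/(R_x\vee R_\varphi)=A/(R_xR_\varphi)$; since by hypothesis {\footnotesize\fbox{1}} \emph{is} this pushout, the regular epimorphism $y\varphi=fx\colon A\to Y$ is, up to isomorphism, the quotient of $A$ by $R_xR_\varphi$, whence $R_{y\varphi}=R_xR_\varphi$. So everything reduces to showing that $R_xR_\varphi$ is effective, and this is exactly what almost exactness provides: $R_xR_\varphi$ is an equivalence relation on $A$ sitting inside the effective equivalence relation $R_{y\varphi}$, and, using $R_x\wedge R_\varphi=1$ together with the permutability, one realizes it (equivalently, realizes the equivalence relation $\varphi(R_x)$ on $Z$, whose inverse image along $\varphi$ it is) as the top of a vertical discrete fibration over a kernel pair, so that the defining property of an almost exact category forces it to be effective. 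I expect this last verification --- producing the right discrete fibration over a kernel pair --- to be the main obstacle; it is precisely the step that is unavailable in a merely regular Gumm category.

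Granting $R_xR_\varphi=R_{y\varphi}$, one concludes as in Proposition~\ref{regular Gumm}. From $\varphi^{-1}(\varphi(R_x))=R_xR_\varphi=R_{y\varphi}=\varphi^{-1}(R_y)$ and the injectivity of $\varphi^{-1}$ on equivalence relations on $Z$ (valid since $\varphi$ is a regular epimorphism) we get $\varphi(R_x)=R_y$, that is $\varphi x^\circ x\varphi^\circ=y^\circ y$. Then Lemma~\ref{pps of ms as relations}, the permutability, and $y\varphi=fx$ give
\[
\varphi x^\circ=\varphi\varphi^\circ\varphi x^\circ xx^\circ=\varphi x^\circ x\varphi^\circ\varphi x^\circ=y^\circ y\varphi x^\circ=y^\circ fxx^\circ=y^\circ f,
\]
so $c$ is an isomorphism, {\footnotesize\fbox{1}} is a pullback, and hence so is {\footnotesize\fbox{2}}.
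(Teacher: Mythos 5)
Your reduction of the statement to the effectiveness of a single equivalence relation is sound (the comparison $\langle\varphi,x\rangle$ is mono, Theorem~\ref{Bourn} gives $R_xR_\varphi=R_\varphi R_x$, the pushout hypothesis identifies $f x$ with the quotient of $A$ by the join once that join is known to be effective, and the final relational computation is the same as in Proposition~\ref{regular Gumm}). But the one step you defer --- ``producing the right discrete fibration over a kernel pair'' so that almost exactness applies --- is not a routine verification: it is the actual content of the proof, and your sketch of where it should come from does not work as stated. The relation $R_xR_\varphi$ is the \emph{inverse image} $\varphi^{-1}(\varphi(R_x))$, and inverse images along regular epimorphisms yield fibrations of equivalence relations, not discrete fibrations; moreover the only kernel pairs available over $Z$ or over $A$ (those of $y$, $vy$, $y\varphi$, $ux$) do not receive a discrete fibration from $\varphi(R_x)$ or from $R_xR_\varphi$ in general --- one can check on the ``after the fact'' picture that the candidate squares over $R_v$ or $R_{vy}$ are not pullbacks. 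So the argument has a genuine gap exactly at the point you flagged.

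The paper closes this gap by working with a different relation, on $X$ rather than on $A$ or $Z$: it takes $R=x(R_\varphi)$, the regular image of $R_\varphi$ along $x$, together with the induced morphism $\rho\colon R\to R_w$. Applying Proposition~\ref{regular Gumm} \emph{at the level of kernel pairs} --- where the missing split epimorphisms of your situation reappear for free, as the kernel-pair projections $R_\varphi\rightrightarrows A$ and $R\rightrightarrows X$ split by reflexivity --- shows that the square relating $R$ and $R_w$ over $u$ is a pullback, i.e.\ a vertical discrete fibration $(u,\rho)\colon R\to R_w$; since $R_w$ is the kernel pair of the regular-epimorphism part of $w$ (note $w$ itself need not be a regular epimorphism, a detail your plan would also have to address), almost exactness makes $R$ effective. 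Transitivity of $R$ is where $R_xR_\varphi=R_\varphi R_x$ is used. Then the pushout hypothesis identifies the coequaliser of $R$ with $f$, so $R=R_f$, and two applications of the Barr--Kock theorem give that both squares are pullbacks. Your route could presumably be repaired by importing exactly this construction (effectiveness of $R=x(R_\varphi)$ gives effectiveness of $R_xR_\varphi=x^{-1}(R)$, after which your identification of the pushout and the computation $\varphi(R_x)=R_y$, $\varphi x^\circ=y^\circ f$ go through), but as written the decisive use of almost exactness is asserted rather than proved.
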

\begin{proof} 

Consider the following commutative diagram where {\footnotesize\fbox{3}} gives the image factorisation of $\langle x\varphi_1, x\varphi_2 \rangle\colon R_{\varphi}\to X\times X$, and the morphism $\rho$ is induced by the equality $wur_1=wur_2$:
$$
  \xymatrix@!0@R=45pt@C=65pt{
    R_{\varphi} \ar@<-4pt>[d]_-{\varphi_1} \ar@<4pt>[d]^-{\varphi_2} \ar@{>>}[r]^-{p} \ar@{}[dr]|-{3} &
    R \ar@<-4pt>[d]_-{r_1} \ar@<4pt>[d]^-{r_2} \ar[r]^-{\rho} \ar@{}[dr]|-{4} & R_{w} \ar@<-4pt>[d]_-{w_1} \ar@<4pt>[d]^-{w_2} \\
    Z\times_V U \ar@{>>}[d]_-{\varphi} \ar[u] \ar@{>>}[r]_-x \ar@{}[dr]|-{1} & X \ar@{>>}[d]^-f \ar[r]_-u \ar[u] \ar@{}[dr]|-{2} & U \ar[d]^-{w} \ar[u]\\
    Z \ar@{>>}[r]_-y & Y \ar[r]_-v & V.}
$$
Since {\footnotesize\fbox{1}\fbox{2}} is a pullback, then (any of the commutative squares) {\footnotesize\fbox{3}\fbox{4}} is also a pullback. We can apply Proposition~\ref{regular Gumm} to {\footnotesize\fbox{3}\fbox{4}} to conclude that (any of the commutative squares) {\footnotesize\fbox{3}} and {\footnotesize\fbox{4}} are pullbacks.

Note that $R=x(R_{\varphi})$ is an equivalence relation: it is necessarily reflexive and symmetric being the image of the equivalence relation $R_{\varphi}$ along a regular epimorphism $x$. It is also transitive: indeed, as in the proof of Proposition~\ref{regular Gumm}, the assumptions still guarantee that $R_{\varphi}R_x=R_xR_{\varphi}$, and this implies that
$$
\begin{array}{rcl}
  RR & = & x\varphi^{\circ}\varphi x^{\circ}x\varphi^{\circ}\varphi x^{\circ} \\
  & = & xx^{\circ}x\varphi^{\circ}\varphi\varphi^{\circ}\varphi x^{\circ} \\
  & = & x\varphi^{\circ}\varphi x^{\circ} \\
  & = & R.
\end{array}
$$
Since regular epimorphisms are effective for descent in $\CC$, the equivalence relation $R$ is effective, and it is then the kernel pair of its coequaliser. Moreover, the fact that the square {\footnotesize\fbox{1}} is a pushout easily implies that this coequaliser is $f\colon \xymatrix@=15pt{ X \ar@{>>}[r] & Y}$, and $R = R_f = f^{\circ} f$. To complete the proof, one applies the Barr-Kock Theorem~\cite{EC} twice to conclude that {\footnotesize\fbox{1}} and {\footnotesize\fbox{2}} are pullbacks.
\end{proof}

\begin{remark}
\emph{Any efficiently regular category in the sense of \cite{BournBaersums} is almost-exact, so that the result above is true in particular in efficiently regular Gumm categories. For instance, this is the case for any category of topolo-gical Mal'tsev algebras \cite{JP}, then in particular for the category of topological groups. Also any almost abelian category in the sense of \cite{Rump} is almost exact (see \cite{GN}). Another example of almost exact category is provided by the category of regular epimorphisms in an exact Goursat category \cite{JS}.}
\end{remark}

\section{An application to Galois Theory}\label{Galois}
In this section we give an application of the results from Section~\ref{Pullback properties} in Categorical Galois Theory~\cite{JanGalois,JK}.

A commutative square of regular epimorphism
$$
  \xymatrix@1{ P \ar@{>>}[r]^-x \ar@{>>}[d]_-a & X \ar@{>>}[d]^-u \\ A \ar@{>>}[r] & U}
$$
is called \emph{right saturated}~\cite{GJRU} when the comparison morphism $\bar{x}\colon R_a \to R_u$ is also a regular epimorphism.

\begin{proposition} Let $\CC$ be a regular Gumm category. Consider a commutative cube
\label{cube pb property}
\begin{equation}
\label{cube}
  \vcenter{\xymatrix@1@R=30pt@C=40pt{
    P \ar@{>>}[rr]^-x \ar@<-2pt>[dd]_-{\varphi} \ar@{>>}[dr]^-a & & X \ar@<-2pt>@{-->}[dd]_(.3)f \ar@{>>}[dr]^-u \\
    & A \ar@<-2pt>[dd]_(.3)g \ar@{>>}[rr] & & U \ar@<-2pt>[dd]_-w \\
    Z \ar@<-2pt>[uu]_-{\sigma} \ar@{-->>}[rr]_(.7){y} \ar@{>>}[dr]_-b & & Y \ar@<-2pt>@{-->}[uu]_(.7)s \ar@{-->>}[dr]_-v  \\
    & B \ar@<-2pt>[uu]_(.7)t \ar@{>>}[rr] & & V \ar@<-2pt>[uu]_-i}}
\end{equation}
of vertical split epimorphisms and regular epimorphisms. If the left and back faces are pullbacks and the top and bottom faces are right saturated, then the front and right faces are also pullbacks.
\end{proposition}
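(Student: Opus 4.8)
The plan is to reduce the statement to Proposition~\ref{regular Gumm}, applied twice. Observe that the two ``horizontal composites'' of squares occurring in the cube, namely $[\,\text{back face}\mid\text{right face}\,]$ and $[\,\text{left face}\mid\text{front face}\,]$, have one and the same outer rectangle: the commutative square $R$ with vertices $P,U,Z,V$ whose edges are $ux=a'a$ on top, $\varphi$ on the left, $w$ on the right and $vy=b'b$ at the bottom. In each of these two composites the left-hand square is, by hypothesis, a pullback made of vertical split epimorphisms and horizontal regular epimorphisms --- exactly the configuration of the square \fbox{1} in Proposition~\ref{regular Gumm}. Hence, as soon as we know that $R$ is a pullback, Proposition~\ref{regular Gumm} applied to $[\,\text{back}\mid\text{right}\,]$ makes the right face a pullback, and applied to $[\,\text{left}\mid\text{front}\,]$ makes the front face a pullback. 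The whole statement therefore reduces to the single claim
$$(\star)\qquad\text{the square }R\text{ is a pullback.}$$

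Note that the right-saturatedness of the top and bottom faces has not been used in the reduction above: it is entirely spent on $(\star)$. To prove $(\star)$ I would argue in the calculus of relations, in the spirit of the proofs of Propositions~\ref{regular Gumm} and~\ref{efficiently regular Gumm}. Writing $R_\varphi$, $R_x$, $R_a$, $R_u$, $R_w$, \dots\ for the kernel pairs involved, the back face being a pullback yields (exactly as in the proof of Proposition~\ref{regular Gumm}) the identities $x(R_\varphi)=R_f$, $\varphi(R_x)=R_y$, $R_\varphi\wedge R_x=1$ and $R_\varphi R_x=R_x R_\varphi$; the left face being a pullback yields the analogous identities with $a,g,b$ in place of $x,f,y$; and the top face being right saturated is, by its very definition, the identity $x(R_a)=R_u$, so that $R_{ux}$ --- the inverse image of $R_u$ along $x$ --- coincides with $R_a\vee R_x$ (and symmetrically $R_{vy}=R_b\vee R_y$ comes from the bottom face). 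Feeding all of this into the comparison morphism $\langle\varphi,ux\rangle\colon P\to Z\times_V U$ of $R$, one has to verify that it is both a monomorphism and a regular epimorphism; here the modular law for the equivalence relations at hand (available in $\CC$ by the Shifting Lemma) together with Theorem~\ref{Bourn}, applied to a triple of equivalence relations as in Proposition~\ref{regular Gumm}, are what should make the verification go through.

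The two reductions --- to $(\star)$ via Proposition~\ref{regular Gumm}, and, inside $(\star)$, the translation of the four hypotheses into the identities above --- are routine. I expect the actual obstacle to be the relational computation itself: to see precisely how the two right-saturatedness hypotheses interact with the two pullback hypotheses so as to force simultaneously the triviality of the meet $R_\varphi\wedge R_{ux}$ (which gives monomorphism) and the surjectivity statement needed for $\langle\varphi,ux\rangle$ to be a regular epimorphism. This is the point at which Theorem~\ref{Bourn} and the congruence modularity of $\CC$ really carry the argument, and at which one must be careful with the bookkeeping of images and inverse images of equivalence relations.
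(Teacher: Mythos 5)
Your reduction is fine as far as it goes: once one knows that the outer square $(\star)$ with vertices $P,U,Z,V$ is a pullback, Proposition~\ref{regular Gumm} applied to the two decompositions $[\,\text{back}\mid\text{right}\,]$ and $[\,\text{left}\mid\text{front}\,]$ does indeed yield the conclusion, since in each case the left-hand square has vertical split epimorphisms and horizontal regular epimorphisms. The problem is that the whole content of the proposition has been pushed into $(\star)$, and you give no actual proof of it: you list the relational identities that the four hypotheses provide and then say you ``expect'' Theorem~\ref{Bourn} and modularity to make the comparison $\langle\varphi,ux\rangle\colon P\to Z\times_V U$ invertible. That expected computation is precisely the non-routine part, and it is not clear it can be carried out along the lines you indicate. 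In particular, your appeal to ``the modular law for the equivalence relations at hand (available in $\CC$ by the Shifting Lemma)'' is unjustified: in a general Gumm category one only has the Shifting Lemma (and Bourn's Theorem~\ref{Bourn}), not modularity of the lattice of equivalence relations; the equivalence with congruence modularity is a varietal statement. So the proof, as it stands, has a genuine gap at its central point.

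For comparison, the paper's argument avoids ever having to establish $(\star)$ up front. It passes to the kernel pairs of $a,b,u,v$ and forms the induced diagram on $R_a,R_u,R_b,R_v$ over $P,X,Z,Y$. There the hypothesis needed for Proposition~\ref{regular Gumm} comes for free: the rectangle formed by the square $(R_a,R_u,R_b,R_v)$ and the square $(R_u,X,R_v,Y)$ coincides with the composite of the kernel-pair square of the left face with the back face of the cube, hence is a pullback by hypothesis; the right-saturation of the top and bottom faces is used exactly to know that $\bar{x}\colon R_a\to R_u$ and $\bar{y}\colon R_b\to R_v$ are regular epimorphisms, and the sections $\sigma,s$ give the split epimorphisms $\bar{\varphi},\bar{f}$. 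Proposition~\ref{regular Gumm} then makes both of these squares pullbacks, the Barr--Kock theorem transfers the second one to the right face of the original cube, and the front face follows by the usual cancellation along the regular epimorphism $b$ (change of base along a regular epimorphism reflects isomorphisms). The pullback property of your outer square $(\star)$ is then a consequence, not an ingredient. If you want to salvage your plan, the missing idea is exactly this detour through the kernel pairs of $a,b,u,v$; a direct relational verification of $(\star)$ from the stated identities is not supplied and would need to be written out in full before the argument can be accepted.
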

\begin{proof}
We take the kernel pairs of $a, b, u$ and $v$ and the induced morphisms between them:
$$
  \xymatrix@1@R=30pt@C=40pt{
    R_a \ar@{>>}[rr]^-{\bar{x}} \ar@<-2pt>[dd]_-{\bar{\varphi}} \ar@<-2pt>[dr]_-{a_1} \ar@<2pt>[dr]^-{a_2} & &
    R_u \ar@<-2pt>@{-->}[dd]_(.3){\bar{f}} \ar@<-2pt>[dr]_-{u_1} \ar@<2pt>[dr]^-{u_2} \\
    & P \ar@<-2pt>[dd]_(.3){\varphi} \ar@{>>}[rr]^(.3)x & & X \ar@<-2pt>[dd]_-f \\
    R_b \ar@<-2pt>[uu]_-{\bar{\sigma}} \ar@{-->>}[rr]_(.7){\bar{y}} \ar@<-2pt>[dr]_-{b_1} \ar@<2pt>[dr]^-{b_2} & &
    R_v \ar@<-2pt>@{-->}[uu]_(.7){\bar{s}} \ar@<-2pt>@{-->}[dr]_-{v_1} \ar@<2pt>@{-->}[dr]^-{v_2}  \\
    & Z \ar@<-2pt>[uu]_(.7){\sigma} \ar@{>>}[rr]_-y & & Y. \ar@<-2pt>[uu]_-s}
$$
Note that $\bar{x}$ and $\bar{y}$ are regular epimorphisms since the top and bottom faces of the cube (\ref{cube}) are right saturated. The left and front faces above are pullbacks, so that the rectangle formed by the back and right faces is a pullback. We can apply Proposition~\ref{regular Gumm} to conclude that both the back and right faces above are pullbacks. By the Barr-Kock Theorem the right face of diagram (\ref{cube}) is then a pullback, hence so is the front face of (\ref{cube}).
\end{proof}

As a consequence of Proposition \ref{cube pb property} we give a new proof of Theorem 4.8 of~\cite{JK} stating that every central and split extension is a trivial extension for the Galois structure associated with any Birkhoff subcategory of an exact Goursat category. Let us briefly recall the main definitions, and we refer to ~\cite{JK} for more details.

When $\CC$ is an exact category and $\XX$ a full replete subcategory of $\CC$
\begin{equation}
\label{adjunction}
  \xymatrix@1{ \CC \ar@<6pt>[r]^-I \ar@{}[r]|-{\bot} & \;\XX, \ar@<6pt>@{_{(}->}[l]}
\end{equation}
one calls $\XX$ a \emph{Birkhoff} subcategory of $\CC$ when $\XX$ is stable in $\CC$ under subobjects and regular quotients. Equivalently, all $X$-components $\eta_X$ of the unit $\eta\colon 1_{\CC}\Rightarrow I$ of the adjunction  are regular epimorphisms (the right adjoint is assumed to be a full inclusion and will not be mentioned explicitely), and the naturality square
\begin{equation}
\label{nat square}
  \vcenter{\xymatrix{X \ar@{>>}[r]^-{\eta_X} \ar@{>>}[d]_-f & IX \ar@{>>}[d]^-{If} \\
  Y \ar@{>>}[r]_-{\eta_Y} & IY}}
\end{equation}
is a pushout for any regular epimorphism $f\colon \xymatrix@=15pt{ X \ar@{>>}[r] & Y}$.

A regular epimorphism $f\colon \xymatrix@=15pt{ X \ar@{>>}[r] & Y}$ is called a \emph{trivial extension} when the naturality square (\ref{nat square}) is a pullback. It is called a \emph{central extension} when it is ``locally'' trivial: there exists a regular epimorphism $y\colon \xymatrix@=15pt{ Z \ar@{>>}[r] & Y}$ such that the pullback of $f$ along $y$ is a trivial extension.

\begin{theorem}\emph{(Theorem 4.8 of~\cite{JK})}
\label{central split}
Let $\CC$ be an exact Goursat category, and $\XX$ a Birkhoff subcategory of $\CC$. Then every central and split extension is necessarily a trivial extension.
\end{theorem}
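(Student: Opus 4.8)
The plan is to deduce Theorem~\ref{central split} from Proposition~\ref{cube pb property} by applying the reflector $I$ to a pullback witnessing centrality. Let $f\colon X\to Y$ be a central extension which is also split, say by a section $s$, and choose a regular epimorphism $y\colon Z\to Y$ such that the pullback $\varphi\colon P:=Z\times_Y X\to Z$ of $f$ along $y$ is a trivial extension; let $x\colon P\to X$ be the second projection. Being a pullback of the split epimorphism $f$, the morphism $\varphi$ is a split epimorphism, with section $\sigma=\langle 1_Z,sy\rangle$, so that all four vertical arrows $\varphi$, $f$, $I\varphi$, $If$ will be split epimorphisms. I would then form the cube of shape~(\ref{cube}) with $A=IP$, $U=IX$, $B=IZ$, $V=IY$, in which $a=\eta_P$, $u=\eta_X$, $b=\eta_Z$, $v=\eta_Y$, the remaining horizontal arrows are $Ix$, $If$, $Iy$, and the sections are $t=I\sigma$, $i=Is$: thus the back face is the defining pullback $fx=y\varphi$, the left, top and bottom faces are the naturality squares of the unit $\eta$ at $\varphi$, $x$ and $y$, and the front face is the image under $I$ of the back square. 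All arrows of this cube are regular epimorphisms: $x$ and $\varphi$ are pullbacks of $y$ and $f$; the components of $\eta$ are regular epimorphisms since $\XX$ is a Birkhoff subcategory; and, e.g., $Ix$ is a regular epimorphism because $Ix\cdot\eta_P=\eta_X\cdot x$ is a composite of regular epimorphisms, and in a regular category a morphism through which a regular epimorphism factors is itself a regular epimorphism (similarly for $If$, $Iy$, $I\varphi$). Commutativity of the cube, and of the cube of sections, follows from the naturality of $\eta$, the functoriality of $I$, and the universal property of the pullback $P$.

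With this cube at hand, Proposition~\ref{cube pb property} reduces Theorem~\ref{central split} to checking that the left and back faces are pullbacks and that the top and bottom faces are right saturated: indeed, the right face of the cube is precisely the naturality square~(\ref{nat square}) of $f$, so that its being a pullback means exactly that $f$ is a trivial extension. The back face is the pullback $P=Z\times_Y X$ by construction, and the left face is the naturality square of $\varphi$, which is a pullback because $\varphi$ was chosen to be a trivial extension.

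The remaining point, which is the heart of the argument, is that the top and bottom faces are right saturated; both are instances of the claim that for \emph{every} regular epimorphism $h\colon M\to N$ in $\CC$ the induced comparison $R_{\eta_M}\to R_{\eta_N}$ is a regular epimorphism, i.e.\ $h(R_{\eta_M})=R_{\eta_N}$ as equivalence relations on $N$ (take $h=x$ for the top face and $h=y$ for the bottom one). The inclusion $h(R_{\eta_M})\leqslant R_{\eta_N}$ is valid in any regular category: by naturality $\eta_N\cdot h=Ih\cdot\eta_M$ factors through $\eta_M$, so $R_{\eta_M}\leqslant h^{\circ}R_{\eta_N}h$, whence $h(R_{\eta_M})=hR_{\eta_M}h^{\circ}\leqslant (hh^{\circ})R_{\eta_N}(hh^{\circ})=R_{\eta_N}$ by Lemma~\ref{pps of ms as relations}. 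For the reverse inclusion one uses that $\CC$ is an exact Goursat category: the image $E:=h(R_{\eta_M})$ is then an equivalence relation, and an effective one, so it admits a quotient $q\colon N\to N/E$; since $R_{\eta_M}\leqslant R_{qh}$, the regular epimorphism $qh$ factors through $\eta_M$, which exhibits $N/E$ as a regular quotient of $IM\in\XX$, hence $N/E\in\XX$, so $q$ factors through $\eta_N$ and $R_{\eta_N}\leqslant R_q=E$. Granting the claim, Proposition~\ref{cube pb property} applies and the right face of the cube --- the naturality square of $f$ --- is a pullback, so $f$ is a trivial extension, as required.

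The main obstacle is precisely this right-saturation claim, and within it the inclusion $R_{\eta_N}\leqslant h(R_{\eta_M})$: here all the hypotheses are used, namely the Goursat property (so that $h(R_{\eta_M})$ is an equivalence relation at all), exactness (so that $E$ is effective and its quotient is again a kernel pair), and the closure of the Birkhoff subcategory $\XX$ under regular quotients (to conclude $N/E\in\XX$). By contrast, the construction of the cube and the verification that its faces and arrows are of the required type are routine.
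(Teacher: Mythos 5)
Your proof is correct, and its overall strategy is the one the paper uses: you build exactly the same cube (back face the defining pullback of the central extension, left face the naturality square of the trivial extension $\varphi$, top and bottom the naturality squares at $x$ and $y$, right face the naturality square of $f$) and feed it to Proposition~\ref{cube pb property}. The only genuine divergence is in how the right saturation of the top and bottom faces is justified. The paper observes that, $\XX$ being Birkhoff, these naturality squares are pushouts of regular epimorphisms, and then quotes Proposition 7.1 of~\cite{CKP}, which says that in a regular Goursat category such pushouts are right saturated. You instead prove directly that $h(R_{\eta_M})=R_{\eta_N}$ for every regular epimorphism $h\colon M\to N$: the inclusion $h(R_{\eta_M})\leqslant R_{\eta_N}$ by naturality and Lemma~\ref{pps of ms as relations}, and the converse by forming the quotient of the equivalence relation $E=h(R_{\eta_M})$ (Goursat gives that $E$ is an equivalence relation, exactness that it is effective), noting that $N/E$ is a regular quotient of $IM$ and hence lies in $\XX$, so that $q$ factors through $\eta_N$. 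This argument is sound; what it buys is a self-contained proof that does not invoke~\cite{CKP} (nor, in fact, the pushout formulation of the Birkhoff condition, only closure under regular quotients), at the cost of reproving, in the special case of the unit squares, a known general fact about pushouts of regular epimorphisms in Goursat categories. One could also note that your right-saturation claim is really a statement about the reflection itself, whereas the cited result applies to arbitrary pushouts of regular epimorphisms, so the paper's route yields slightly more reusable information; but for the theorem at hand both are equally adequate.
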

\begin{proof} Let $f\colon \xymatrix@=15pt{ X \ar@{>>}[r] & Y}$ be both a central extension and a split epimorphism. By definition, there exists a regular epimorphism $y\colon \xymatrix@=15pt{ Z \ar@{>>}[r] & Y}$ such that the pullback of $f$ along $y$ is a trivial extension. So, in the following commutative cube
$$
  \xymatrix@1@R=30pt@C=40pt{
    P \ar@{>>}[rr]^-x \ar@<-2pt>[dd]_-{\varphi} \ar@{>>}[dr]^-{\eta_P} & & X \ar@<-2pt>@{-->}[dd]_(.3)f \ar@{>>}[dr]^-{\eta_X} \\
    & IP \ar@<-2pt>[dd]_(.3){I\varphi} \ar@{>>}[rr] & & IX \ar@<-2pt>[dd]_-{If} \\
    Z \ar@<-2pt>[uu]_-{\sigma} \ar@{-->>}[rr]_(.7){y} \ar@{>>}[dr]_-{\eta_Z} & & Y \ar@<-2pt>@{-->}[uu]_(.7)s \ar@{-->>}[dr]_-{\eta_Y}  \\
    & IZ \ar@<-2pt>[uu]_(.7){I\sigma} \ar@{>>}[rr] & & IY, \ar@<-2pt>[uu]_-{Is}}
$$
the back face is a pullback by construction, and the left face is a pullback by the assumption that $\varphi$ is a trivial extension. Note that the top and bottom faces are pushouts of regular epimorphisms and are then right saturated (by Proposition 7.1 of~\cite{CKP}). By Proposition~\ref{cube pb property} we conclude that the front and right faces are pullbacks. It follows that $f$ is a trivial extension.
\end{proof}
A regular epimorphism $f\colon \xymatrix@=15pt{ X \ar@{>>}[r] & Y}$ is called a \emph{normal extension} if the pullback of $f$ along itself is a trivial extension. By definition any normal extension is central, but the converse is false in general, as various counter-examples given in \cite{JK} show. In the exact Goursat context the notions of central and normal extensions coincide:
\begin{corollary}\label{Coincidence}\cite{JK}
Consider a Birkhoff adjunction \emph{(\ref{adjunction})} where $\CC$ is an exact Goursat category. Then every central extension is normal.
\end{corollary}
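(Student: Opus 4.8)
The plan is to derive this from Theorem~\ref{central split} by a short and standard argument: the point is to recognize one of the kernel pair projections of a central extension as a \emph{split} central extension.

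First I would recall that central extensions are stable under pullback along arbitrary morphisms in a regular category. Indeed, if $f\colon \xymatrix@=15pt{X \ar@{>>}[r] & Y}$ is trivialised by a regular epimorphism $y\colon \xymatrix@=15pt{Z \ar@{>>}[r] & Y}$ (meaning that the pullback $y^{*}(f)$ is a trivial extension), then for any $g\colon W\to Y$ one forms the pullback of $y$ along $g$, obtaining a regular epimorphism $h\colon \xymatrix@=15pt{W' \ar@{>>}[r] & W}$ together with a morphism $W'\to Z$ over $Y$; pulling $g^{*}(f)$ back along $h$ yields a pullback of the trivial extension $y^{*}(f)$, which is again trivial since trivial extensions are pullback-stable. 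Hence $g^{*}(f)$ is locally trivial along $h$, i.e.\ central. Then, given a central extension $f\colon \xymatrix@=15pt{X \ar@{>>}[r] & Y}$, I would form its kernel pair $(R_f, f_1, f_2)$ and focus on the projection $f_2\colon \xymatrix@=15pt{R_f \ar@{>>}[r] & X}$. This morphism is a regular epimorphism, being a pullback of $f$ in the regular category $\CC$; it is a split epimorphism, since the diagonal $\langle 1_X, 1_X\rangle\colon X\to R_f$ is a common section of $f_1$ and $f_2$; and, being the pullback of $f$ along $f$, it is a central extension by the observation just made. Theorem~\ref{central split} therefore applies to $f_2$, showing that $f_2$ is a trivial extension.

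By the very definition of a normal extension (the pullback of $f$ along itself being a trivial extension), this is exactly the statement that $f$ is normal; the same holds for $f_1$ by the evident symmetry of $R_f$. The argument is essentially immediate once Theorem~\ref{central split} is in hand, so there is no genuine obstacle here; the only step needing a word of justification is the pullback-stability of central extensions along arbitrary morphisms, but this is a well-known fact of Categorical Galois Theory and uses nothing beyond the pullback-stability of regular epimorphisms and of trivial extensions in a regular category.
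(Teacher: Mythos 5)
Your overall strategy is exactly the one the paper uses: pull $f$ back along itself, observe that the kernel pair projection is a split epimorphism (via the diagonal) and a central extension (by pullback stability of central extensions), and apply Theorem~\ref{central split} to conclude that it is trivial, i.e.\ that $f$ is normal. That part of the argument is fine.

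The gap is in your justification of pullback stability. You reduce the stability of central extensions to the statement that \emph{trivial} extensions are stable under pullback, and you assert that this ``uses nothing beyond the pullback-stability of regular epimorphisms \ldots in a regular category''. That is not so: a trivial extension is a regular epimorphism $f$ whose naturality square \eqref{nat square} is a pullback, and when you pull $f$ back along some $h\colon W\to Y$ you need to compare $W\times_Y X\cong W\times_{IY}IX$ with $W\times_{IW}I(W\times_Y X)$, which requires knowing that the reflector $I$ preserves the relevant pullback. This is precisely the \emph{admissibility} of the Birkhoff adjunction \eqref{adjunction} in the sense of Categorical Galois Theory (Remark~\ref{final}), and it is not a formal consequence of regularity: it holds here only because an exact Goursat category has modular lattices of equivalence relations (equivalently, is an almost exact Gumm category), by the results of \cite{JK} and \cite{Rossi} -- which is also what Proposition~\ref{efficiently regular Gumm} recovers. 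So your proof is correct once you replace the claimed ``well-known fact in a regular category'' by an appeal to admissibility (or to Proposition~\ref{efficiently regular Gumm} together with Remark~\ref{final}); without that ingredient the step where you declare the pullback of the trivial extension $y^{*}(f)$ to be trivial is unjustified.
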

\begin{proof}
This follows immediately from Theorem \ref{central split} and the fact that central extensions are pullback stable (this follows from the fact that adjunction is ``admissible'' in the sense of Categorical Galois Theory, see the Remark \ref{final} here below).
\end{proof}

\begin{remark}\label{final}
\emph{A Birkhoff subcategory $\XX$ of an exact category $\CC$ is called \emph{admissible} when $I$ preserves pullbacks of the form
\begin{equation}\label{admissibilitysquare}
  \vcenter{\xymatrix{A \ar[r]^-{n} \ar@{>>}[d]_-{\varphi} \ar@{}[dr]|(.35){\lrcorner} & U \ar@{>>}[d]^-{w} \\
  B \ar[r]_-{m} & V,}}
\end{equation}
where $w\colon \xymatrix@=15pt{ U \ar@{>>}[r] & V}$ is a regular epimorphism of $\XX$ (see Proposition 3.3 of~\cite{JK}). In~\cite{Rossi} V. Rossi proved that any Birkhoff subcategory of an almost exact Gumm category is admissible, extending a result due to G. Janelidze and G.M. Kelly \cite{JK}. The proof of the more general Proposition \ref{efficiently regular Gumm} above is actually similar to the one given in~\cite{Rossi}. In order to deduce the admissibility result from Proposition \ref{efficiently regular Gumm} it suffices to decompose the square \eqref{admissibilitysquare} above as
$$
  \vcenter{\xymatrix{A \ar@{>>}[r]^-{\eta_A} \ar@{>>}[d]_-{\varphi} & IA \ar[r]^-{In} \ar@{>>}[d]^-{I\varphi} & U \ar@{>>}[d]^-{w} \\
  B \ar@{>>}[r]_-{\eta_B} & IB \ar[r]_-{Im} & V}}
$$
to conclude that both squares are pullbacks.}
\end{remark}


\end{document}